\newcommand{\R}{\mathbb{R}}
\newcommand{\C}{\mathbb{C}}
\newcommand{\OO}{\mathcal{O}}
\newcommand{\il}{\mathcal{L}}
\newcommand{\ir}{\mathcal{R}}
\newcommand{\ik}{\mathcal{K}}
\newcommand{\jj}{\mathcal{J}}
\newtheorem{thm}{Theorem}[section]
\newtheorem{lem}{Lemma}[section]
\newtheorem{rmk}{Remark}[section]
\newtheorem{dfn}{Definition}[section]
\begin{document}

\title[Invariants of Relative Right and Contact Equivalences]{Invariants of Relative Right and Contact Equivalences}

\author[I. Ahmed]{Imran Ahmed$^{1}$}
\thanks{1 Supported by CNPq-TWAS, grant \# FR 3240188100}
\address{Imran Ahmed, Department of Mathematics, COMSATS Institute of Information Technology, M.A. Jinnah Campus, Defence Road, off Raiwind Road Lahore, PAKISTAN.}
\email{drimranahmed@ciitlahore.edu.pk}
\author[M.A.S. Ruas]{Maria Aparecida Soares Ruas$^{2}$}
\thanks{2 Partially supported by FAPESP, grant \# 08/54222-6, and CNPq,
grant \# 301474/2005-2}
\address{Maria Aparecida Soares Ruas, Departamento de Matem$\acute{a}$ticas, Instituto de Ci\^{e}ncias Matem$\acute{a}$ticas e
de Computa\c{c}\~{a}o, Universidade de S\~{a}o Paulo, Avenida Trabalhador
S\~{a}ocarlense 400, S\~{a}o
Carlos-S.P., Brazil.} \email{maasruas@icmc.usp.br}

\begin{abstract}

We study holomorphic function germs under  equivalence relations that
preserve an analytic variety.

We show that two  quasihomogeneous polynomials, not necessarily with isolated
singularities, having isomorphic relative Milnor algebras are relative right
equivalent. Under the condition that the module of vector fields tangent to the
variety is finitely generated, we also show that the relative Tjurina algebra
is a complete invariant for the classification of arbitrary function germs with
respect to the relative  contact equivalence. This is the relative version of a
well known result by Mather and Yau.
\end{abstract}

\subjclass[2000]{Primary 14L30, 14J17; Secondary 16W22.}

\keywords{relative Milnor algebra, relative Tjurina algebra,
relative right equivalence, relative contact equivalence,
quasihomogeneous polynomial, function germ.}

\maketitle

\theoremstyle{definition}
\newtheorem{ex}{Example}

\section{Introduction}

Let $\OO_n$ be the ring of germs of analytic functions
$h:(\C^n,0)\to\C$. Consider the analytic variety
$V=\{x:f_1(x)=\ldots=f_r(x)=0\}\subset(\C^n,0)$, where
$f_1,\ldots,f_r$ are germs of analytic functions. In this note we
study function germs $h:(\C^n,0)\to(\C,0)$ under the equivalence
relation that preserves the analytic variety $(V,0)$.

We say that two function germs $h_1$ and $h_2$ :$(\C^n,0)\to(\C,0)$
are $\ir_V$-equivalent if there exists a germ of a diffeomorphism
$\psi:(\C^n,0)\to(\C^n,0)$ with $\psi(V)=V$ and $h_1\circ\psi=h_2$.
That is,
$$\ir_V=\{\psi\in\ir:\psi(V)=V\}$$
where $\ir$ is the group of germs of diffeomorphisms of $(\C^n,0)$.

Two function germs $h_1$ and $h_2$ :$(\C^n,0)\to(\C,0)$ are
$\ik_V$-equivalent if there exists a germ of a diffeomorphism
$\psi:(\C^n,0)\to(\C^n,0)$ and a unit $u\in\OO_n^*$ such that
$\psi(V)=V$ and $h_1=u\cdot(h_2\circ\psi)$.

These equivalence relations reduce to the well known $\mathcal R$ and $\mathcal K$ equivalences when we take
$V=\{0\}.$  The investigation of  invariants of  the classification of function germs $f: (\mathbb C^n,0) \to (\mathbb C,0)$
with respect to these equivalence relations is a classical problem in singularity theory. In this setting, the Milnor algebra
 $M(f)\,=\, \frac{\OO_n}{\jj_f}$  and the Tjurina algebra  $T(f)=\frac{\OO_n}{\langle f,\jj_f\rangle}$ play a central r\^ole.  The celebrated result of  Mather and Yau in \cite{MY} gives that the Tjurina algebra
is a complete invariant for $\mathcal K$-equivalence of function germs with isolated singularities. As a consequence, it follows that the Tjurina algebras classify  hypersurfaces with isolated singularities. In a subsequent paper \cite{Y}, Yau also
considers  the right-left   equivalence ($\mathcal R \mathcal L$-equivalence) and proves that two function germs $f$ and $g$ are $\mathcal R  \mathcal L$- equivalent if and only if their Milnor algebra are mixed isomorphic. This last condition
means that there exist
$\C$-algebra isomorphisms $\sigma:M(f)\to M(g)$, $\tau:\C\{t\}/(t^{n+1})\to
\C\{t\}/(t^{n+1})$ such that $\sigma (a(t)\cdot
h)=\tau(a(t))\cdot\sigma(h)$ for any $a(t)\in\C\{t\}/(t^{n+1})$ and
any $h\in M(f)$.

One interesting question is to determine the relationship between these equivalence relations. This  was considered by Benson and Yau, in \cite{BY}, who gave   a necessary and sufficient condition for
$\ir\il$-equivalence to coincide with $\ik$-equivalence, see Theorem
5.1 \cite{BY}.

Our aim is to investigate the relative versions of these known
classical results,  without imposing the the condition of having
isolated singularities at the origin. We define the relative Milnor
and Tjurina algebra (see Definition \ref{def1}) and investigate
their r\^ole in the classification of function germs with respect to
$\mathcal R_V$ and $\mathcal K_V$  equivalences.

 In Theorem \ref{th9.2} we show that two arbitrary (i.e. not necessarily with isolated singularities) quasihomogeneous polynomials $f$ and $g$ having isomorphic relative Milnor
algebras $M_V(f)$ and $M_V(g)$ are $\ir_V$-equivalent.

The Example of Gaffney and Hauser, in \cite{GH}, suggests us that we
can not extend this result for arbitrary analytic germs.

In Theorem \ref{th9.4} we prove that two arbitrary complex-analytic
hypersurfaces (i.e. not necessary with isolated singularities), one
is quasihomogeneous and other is arbitrary, are determined by
isomorphism of Jacobian ideals.

This is the relative version of the  theorem by Mather and
Yau \cite{MY}.  For arbitrary hypersurface
singularities (i.e. not necessary with isolated singularities), the
Mather-Yau Theorem (even a more general version) has been proved by
Greuel, Lossen and Shustin \cite{GLS}.


\section{Basic definitions }

We denote by $\theta_n$ the set of germs of tangent vector fields in
$(\C^n,0)$; $\theta_n$ is a free $\OO_n$ module of rank $n$. Let
$I(V)$ be the ideal in $\OO_n$ consisting of germs of analytic
functions vanishing on $V$. We denote by
$\Theta_V=\{\eta\in\theta_n:\eta(I(V))\subseteq I(V)\}$, the
submodule of germs of vector fields tangent to $V$.

The tangent space to the action of the group $\ir_V$ is
$T\ir_V(h)=dh(\Theta_V^0)=\jj_h(\Theta_V^0)=\langle
dh(\xi_i):\,\xi_i\mbox{ are the generators of }\Theta_V^0\rangle$,
where $\Theta_V^0$ is the submodule of $\Theta_V$ given by the
vector fields that are zero at zero. When the point $x=0$ is a
stratum in the logarithmic stratification of the analytic variety,
this is the case when $V$ has an isolated singularity at the origin,
see \cite{BR} for details, both spaces $\Theta_V$ and $\Theta_V^0$
coincide.

The tangent space to the action of the group $\ik_V$ is $T\ik_V(h)=\langle h,dh(\Theta_V^0)\rangle=\langle h,\jj_h(\Theta_V^0)\rangle$.

We fix a system of local coordinates $x$ of $\C^n$. Due to the identification between $\OO_n$ and the ring of convergent power series $\C\{x_1,\ldots,x_n\}$ we
identify a germ $f\in\OO_n$ with its power series $f(x)=\sum a_{\alpha}x^{\alpha}$, where
$x^{\alpha}=x_1^{\alpha_1}\ldots x_n^{\alpha_n}$.

Now, we define relative Milnor and Tjurina algebras.

\begin{dfn}\label{def1}
The {\it relative Milnor and Tjurina algebras}, $M_V(h)$ and
$T_V(h)$, of $h$ are defined respectively, by
$$M_V(h)=\frac{\C\{x_1,\ldots,x_n\}}{\jj_h(\Theta_V^0)}\,\,\mbox{and}\,\,T_V(h)=\frac{\C\{x_1,\ldots,x_n\}}{\langle h,\jj_h(\Theta_V^0)\rangle},$$
where $\jj_h(\Theta_V^0)$ is the ideal defined by
$$\jj_h(\Theta_V^0)=dh(\Theta_V^0)=\langle
dh(\xi_i):\,\xi_i\mbox{ are the generators of }\Theta_V^0\rangle.$$
\end{dfn}

When $V$ is a weighted homogeneous variety, we can always choose
weighted homogeneous generators for $\Theta_V$. Moreover, it is
finitely generated \cite{D}, Lemma 3.2, p.41. In this case, the
ideal $\jj_h(\Theta_V^0)$ is given by
$$\jj_{h}(\Theta_V^0)=\langle dh(\xi_i):i=1,\ldots,p\rangle$$
where $dh(\xi_i)=\sum_{j=1}^n a_{ij}x^{\underline{P}}\frac{\partial
h}{\partial x_j},\,\,<\underline{P},\underline{w}>=w_j$.

\section{Quasihomogeneous Functions and Filtrations}

We recall first some basic facts on quasihomogeneous functions and filtrations in the ring $A$ of formal power series. We introduce, in the next section, their analogues for quasihomogeneous diffeomorphisms and vector fields. For a more complete introduction see \cite{A1}, Chap. 1, \S 3.

A holomorphic function $f:(\C^n,0)\to (\C,0)$ (defined on the complex space $\C^n$) is a quasihomogeneous function of
degree $d$ with weights $w_1,\ldots, w_n$ if
$$f(\lambda^{w_1}x_1,\ldots,\lambda^{w_n}x_n)=\lambda^d f(x_1,\ldots,x_n)\,\forall\, \lambda >0.$$

In terms of the Taylor series $\sum f_{\underline{k}}x^{\underline{k}}$ of $f$, the quasihomogeneity
condition means that the exponents of the nonzero terms of the series lie in the hyperplane
$$L=\{\underline{k}:w_1k_1+\ldots+w_n k_n=d\}.$$

Any quasihomogeneous function $f$ of degree $d$ satisfies Euler's identity
\begin{equation}\label{euler}
\sum_{i=1}^n w_i x_i \frac{\partial f}{\partial x_i}=d.f
\end{equation}
It implies that a quasihomogeneous function $f$ belongs to its
Jacobian ideal $\jj_f$. The following is the well known result of
Saito \cite {S}.
\begin{thm}
A function-germ $f:(\C^n,0)\to (\C,0)$ is equivalent to a
quasihomogeneous function-germ if and only if $f\in\jj_f$.
\end{thm}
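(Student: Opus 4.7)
The ``only if'' direction follows at once from Euler's identity. If $f = g\circ\psi$ with $\psi$ a diffeomorphism and $g$ quasihomogeneous of degree $d$ with weights $w_1,\ldots,w_n$, then \eqref{euler} gives $g = \frac{1}{d}\sum_i w_i x_i g_i \in \jj_g$. Composing with $\psi$ and using that the Jacobian matrix of $\psi$ is invertible (so that $\jj_f$ coincides with the ideal generated by the $g_i\circ\psi$), we obtain $f = g\circ\psi \in \jj_f$.

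For the ``if'' direction, write $f = \sum_{i=1}^n a_i f_i$ with $a_i \in \OO_n$, and introduce the vector field
\[
v = \sum_{i=1}^n a_i(x)\,\frac{\partial}{\partial x_i},
\]
which by construction satisfies $v(f) = f$. The plan is to produce an analytic diffeomorphism $\psi$ that sends $v$ to an Euler vector field $\sum_i w_i y_i \,\partial/\partial y_i$ with positive rational weights $w_i$. In such coordinates the relation $v(f)=f$ becomes precisely Euler's identity for $\tilde f := f\circ\psi^{-1}$ with weights $w_i$ and degree $1$, whence $\tilde f$ is quasihomogeneous (after clearing denominators to make the weights and degree integral).

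The heart of the argument is the analysis of the linear part $L = dv(0)$. The case in which $f$ has no critical point at the origin is trivial, since $f$ is then $\ir$-equivalent to a coordinate function and hence quasihomogeneous. When $0$ is a critical point, comparing low-order Taylor coefficients in the identity $v(f)=f$ forces $L$, after a preliminary linear change of coordinates, to be diagonalizable with positive rational eigenvalues; these are exactly the prospective weights. One then invokes a linearization theorem of Poincar\'e--Dulac type to conjugate $v$ analytically to its diagonal linear part, yielding the desired Euler form.

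The main obstacle is this linearization step, where resonances among the eigenvalues $w_i$ must be controlled. The saving observation is that any resonant monomial surviving in the normal form of $v$ would correspond to a quasihomogeneous monomial in $f$ of the same weighted type, so such terms are automatically compatible with the Euler structure of $\tilde f$ and can be absorbed without destroying the normal form. Since all eigenvalues $w_i$ are positive, $v$ lies in the Poincar\'e domain, so convergence of the normalizing transformation (or, alternatively, an Artin approximation argument applied to a formal normalizing series) completes the proof.
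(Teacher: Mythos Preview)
The paper does not prove this theorem; it is quoted from Saito's paper \cite{S} as a preliminary fact and no argument is supplied. There is therefore no proof in the paper to compare your attempt against.

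Your outline does follow the shape of Saito's original argument, so a brief assessment is still worth giving. The ``only if'' direction is fine. For the ``if'' direction, two steps are underargued. First, the claim that the linear part $L=dv(0)$ can be taken diagonalizable with positive rational eigenvalues is the substantial core of \cite{S}, not a matter of ``comparing low-order Taylor coefficients''; Saito's argument uses that $f$ has an \emph{isolated} critical point (so that $\dim_{\C}\OO_n/\jj_f<\infty$), a hypothesis present in \cite{S} but omitted in the statement as reproduced here. Second, your treatment of resonances conflates monomials of $v$ with monomials of $f$: a resonant term in the Poincar\'e--Dulac normal form of $v$ is a vector-field monomial $x^{\alpha}\partial_i$ with $\langle\alpha,w\rangle=w_i$, not a term of $f$, so the sentence about such terms ``corresponding to a quasihomogeneous monomial in $f$'' does not parse. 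The clean way to finish is to pass to the semisimple part $v_s$ of $v$ (Jordan decomposition on each jet level); since $f$ is a genuine eigenvector of $v$ with eigenvalue $1$ one also has $v_s(f)=f$, and a semisimple derivation with eigenvalues in the Poincar\'e domain is linearizable. With those two gaps filled your plan becomes Saito's proof.
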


Consider $\C^n$ with a fixed coordinate system $x_1,\ldots,x_n$. The
algebra of formal power series in the coordinates will be denoted by
$A=\C[[x_1,\ldots,x_n]]$. We assume that a quasihomogeneity type
$\underline{w}=(w_1,\ldots,w_n)$ is fixed. With each such
$\underline{w}$ there is associated a filtration of the ring $A$,
defined as follows.

The monomial ${\bf x}^{\underline{k}}$ is said to have degree $d$ if
$<\underline{w},\underline{k}>=w_1k_1+\ldots+w_n k_n=d$.

The order $d$ of a series (resp. polynomial) is the smallest of the
degrees of the monomials that appear in that series (resp.
polynomial).

The series of order larger than or equal to $d$ form a subspace
$A_d\subset A$. The order of a product is equal to the sum of the
orders of the factors. Consequently, $A_d$ is an ideal in the ring
$A$. The family of ideals $A_d$ constitutes a decreasing filtration
of $A$: $A_{\acute{d}}\subset A_d$ whenever $\acute{d}>d$. We let
$A_{d+}$ denote the ideal in $A$ formed by the series of order
higher than $d$.

The quotient algebra $A/A_{d+}$ is called the algebra of
$d$-quasijets, and its elements are called $d$-quasijets.

\section{Quasihomogeneous Diffeomorphisms and Vector Fields}

Several Lie groups and algebras are associated with the filtration
defined in the ring $A$ of power series by the type of
quasihomogeneity $\underline{w}$. In the case of ordinary
homogeneity these are the general linear group, the group of
$k$-jets of diffeomorphisms, its subgroup of $k$-jets with
$(k-1)$-jet equal to the identity, and their quotient groups. Their
analogues for the case of a quasihomogeneous filtration are defined
as follows.

A formal diffeomorphism $g:(\C^n,0)\to (\C^n,0)$ is a set of $n$
power series $g_i\in A$ without constant terms for which the map
$g^{\ast}:A\to A$ given by the rule $g^{\ast}f=f\circ g$ is an
algebra isomorphism.

The diffeomorphism $g$ is said to have order $d$ if for every $s$
$$(g^{\ast}-1)A_s\subset A_{s+d}.$$

The set of all diffeomorphisms of order $d\geq0$ is a group $G_d$.
The family of groups $G_d$ yields a decreasing filtration of the
group $G$ of formal diffeomorphisms; indeed, for $\acute{d}>d\geq0$,
$G_{\acute{d}}\subset G_d$ and is a normal subgroup in $G_d$.

The group $G_0$ plays the role in the quasihomogeneous case that the
full group of formal diffeomorphisms plays in the homogeneous case.
We should emphasize that in the quasihomogeneous case $G_0\neq G$,
since certain diffeomorphisms have negative orders and do not belong
to $G_0$.

The group of $d$-quasijets of type $\underline{w}$ is the quotient
group of the group of diffeomorphisms $G_0$ by the subgroup $G_{d+}$
of diffeomorphisms of order higher than $d$: $J_d=G_0/G_{d+}$.

Note that in the ordinary homogeneous case our numbering differs from the
standard one by $1$: for us $J_0$ is the group of $1$-jets and so
on.

$J_d$ acts as a group of linear transformations on the space $A/A_{d+}$ of $d$-quasijets of functions. A special importance is attached to the group $J_0$, which is the quasihomogeneous generalization of the general linear group.

A diffeomorphism $g\in G_0$ is said to be quasihomogeneous of type
$\underline{w}$ if each of the spaces of quasihomogeneous functions
of degree $d$ (and type \underline{w}) is invariant under the action
of $g^{\ast}$.

The set of all quasihomogeneous diffeomorphisms is a subgroup of
$G_0$. This subgroup is canonically isomorphic to $J_0$, the
isomorphism being provided by the restriction of the canonical
projection $G_0\to J_0$.

\medskip

The infinitesimal analogues of the concepts introduced above look as
follows.

A formal vector field $v=\sum v_i\partial_i$, where
$\partial_i=\partial/\partial x_i$, is said to have order $d$ if
differentiation in the direction of $v$ raises the degree of any
function by at least $d$: $L_v A_s\subset A_{s+d}$.

We let $\mathfrak{g}_d$ denote the set of all vector fields of order $d$. The
filtration arising in this way in the Lie algebra $\mathfrak{g}$ of vector
fields (i.e., of derivations of the algebra $A$) is compatible with
the filtrations in $A$ and in the group of diffeomorphisms $G$:\\
1. $f\in A_d, v\in \mathfrak{g}_s\Rightarrow fv\in \mathfrak{g}_{d+s}, L_vf\in A_{d+s}$\\
2. The module $\mathfrak{g}_d$, $d\geq 0$, is a Lie algebra w.r.t. the Poisson
bracket of vector fields.\\
3. The Lie algebra $\mathfrak{g}_d$ is an ideal in the Lie algebra $\mathfrak{g}_0$.\\
4. The Lie algebra $\mathfrak{j}_d$ of the Lie group $J_d$ of $d$-quasijets of
diffeomorphisms is equal to the quotient algebra $\mathfrak{g}_0/\mathfrak{g}_{d+}$.\\
5. The quasihomogeneous vector fields of degree $0$ form a finite-dimensional Lie subalgebra of the Lie algebra $\mathfrak{g}_0$; this subalgebra
is canonically isomorphic to the Lie algebra $\mathfrak{j}_0$ of the group of
$0$-jets of diffeomorphisms.

The support of a quasihomogeneous function of degree $d$ and type $\underline{w}$ is the set of all points $\underline{k}$ with nonnegative integer coordinates on the diagonal $$L=\{\underline{k}:\langle\underline{k},\underline{w}\rangle=d\}.$$

Quasihomogeneous functions can be regarded as functions given on their supports: $\sum f_{\underline{k}}x^{\underline{k}}$ assumes at $\underline{k}$ the value $f_{\underline{k}}$. The set of all such functions is a linear space $\C^r$, where $r$ is the number of points in the support. Both the group of quasihomogeneous diffeomorphisms (of type \underline{w}) and its Lie algebra $\mathfrak{a}$ act on this space.

The Lie algebra $\mathfrak{a}$ of a quasihomogeneous vector field of degree $0$
is spanned, as a $\C$-linear space, by all monomial fields
$x^{\underline{P}}\partial_i$ for which
$<\underline{P},\underline{w}>=w_i$. For example, the $n$ fields
$x_i\partial_i$ belong to $\mathfrak{a}$ for any $\underline{w}$.

\begin{ex}
Consider the quasihomogeneous polynomial $f=x^2y+z^2$ of degree
$d=6$ w.r.t. weights $(2,2,3)$. Note that the Lie algebra of
quasihomogeneous vector fields of degree $0$ is spanned by
$$\mathfrak{a}=\langle x^{\underline{P}}\partial_i:<\underline{P},\underline{w}>=w_i, i=1,2,3 \rangle
=\langle x\frac{\partial}{\partial x},x\frac{\partial}{\partial
y},y\frac{\partial}{\partial x}, y\frac{\partial}{\partial
y},z\frac{\partial}{\partial z}\rangle$$
\end{ex}

\section{Function Germs with Isomorphic Local Algebras}

We recall first Mather's lemma providing effective necessary  and sufficient conditions for a connected submanifold (in our case the path $P$) to be contained in an orbit.

\begin{lem} (\cite{Ma})\label{lem4.3.1}
Let $m:G\times M\to M$ be a smooth action and $P\subset M$ a
connected smooth submanifold. Then $P$ is contained in a single
$G$-orbit if and only if the following conditions are
fulfilled:\\
(a) $T_x(G\cdot x)\supset T_xP$, for any $x \in P$.\\
(b) $\dim T_x(G\cdot x)$ is constant for $x\in P$.
\end{lem}

For arbitrary (i.e. not necessary with isolated singularities)
quasihomogeneous polynomials we establish the following results.

\begin{lem}\label{lem9.1}
Let $f,g\in H_{\underline{w}}^d(n,1;\C)=H_{\underline{w}}^d$ be two
quasihomogeneous polynomials of degree $d$ and $\Phi\in H_{\underline{w}}^r(n,1;\C)$ be a quasihomogeneous polynomial of degree $r$ w.r.t. the same weights $\underline{w}=(w_1,\ldots,w_n)$ such that $\jj_f(\Theta_V^0)=\jj_g(\Theta_V^0)$, where $\Phi^{-1}(0)=V$ is a hypersurface in $(\C^n,0)$. Then
$f\stackrel{\ir_V}{\sim}g$, where $\stackrel{\ir_V}{\sim}$ denotes the relative right equivalence.
\end{lem}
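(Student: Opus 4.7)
My plan is to apply Mather's Lemma (\ref{lem4.3.1}) to the $\ir_V$-action along the affine homotopy
$$F_t = (1-t)f + tg, \qquad t\in[0,1],$$
with $F_0 = f$ and $F_1 = g$. Because $f$ and $g$ are quasihomogeneous of degree $d$, so is each $F_t$, and $P=\{F_t\}$ is a smooth connected path with tangent direction $\dot F_t = g-f$. Since $T_{F_t}(\ir_V\cdot F_t) = \jj_{F_t}(\Theta_V^0)$, the task reduces to verifying (a) $g-f \in \jj_{F_t}(\Theta_V^0)$ for every $t$, and (b) that the dimension of this tangent module is constant along $P$.

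The preliminary observation is that the Euler vector field $E = \sum_i w_i x_i\partial_i$ lies in $\Theta_V^0$: since $\Phi$ is quasihomogeneous of degree $r$ and cuts out $V$, one has $E(\Phi) = r\Phi \in I(V)$ and $E(0)=0$. Euler's identity \eqref{euler} then yields $F_t = \frac{1}{d}\jj_{F_t}(E) \in \jj_{F_t}(\Theta_V^0)$, and in particular $f\in\jj_f(\Theta_V^0)$ and $g\in\jj_g(\Theta_V^0)$. Writing $M := \jj_f(\Theta_V^0) = \jj_g(\Theta_V^0)$ as afforded by the hypothesis, we get immediately $f,\, g,\, g-f \in M$.

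The crux of the proof is the identification $\jj_{F_t}(\Theta_V^0) = M$ for every $t$. The inclusion $\subseteq$ is straightforward from $\jj_{F_t}(\eta) = (1-t)\jj_f(\eta) + t\jj_g(\eta) \in M$. For the reverse inclusion I select weighted homogeneous generators $\eta_1,\ldots,\eta_m$ of $\Theta_V^0$ (available by \cite{D}) and use the hypothesis to write $\jj_g(\eta_i) = \sum_j a_{ij}\jj_f(\eta_j)$ with $a_{ij}\in\OO_n$; the transition matrix expressing the generators $\jj_{F_t}(\eta_i)$ in terms of $\jj_f(\eta_j)$ is then $B(t) = (1-t)I + tA$, with $B(0) = I$. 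Consequently $\det B(t)\big|_{x=0}\in \C[t]$ is a polynomial with value $1$ at $t=0$, so it is not identically zero and $B(t)$ is invertible over $\OO_n$ on a dense connected Zariski open set of complex $t$-values containing $0$ and $1$, giving the desired equality there. Once the identification is in hand, condition (a) follows since $g-f\in M = \jj_{F_t}(\Theta_V^0)$ and (b) is automatic because the tangent space is literally constant, so Mather's Lemma forces $P$ into a single orbit and yields $f \stackrel{\ir_V}{\sim} g$. The main obstacle is controlling $B(t)$ across the path: to rule out a drop of the tangent module at an exceptional $t$ one must either pass to a minimal generating set of $M$ so that $A$ is invertible over $\OO_n$ outright, or else perform a graded comparison on the finite-dimensional weight-$d$ component $M^d$ to ensure that the affine family of maps $\eta\mapsto\jj_{F_t}(\eta)$ does not lose rank at any point of the complex segment from $f$ to $g$.
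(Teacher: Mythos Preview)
Your outline follows essentially the same strategy as the paper: take the linear homotopy $F_t=(1-t)f+tg$ inside $H_{\underline{w}}^d$, use the Euler field (which is tangent to $V$ because $\Phi$ is quasihomogeneous) to place $g-f$ in the tangent space, show the tangent spaces are constant off a finite set of parameter values, reparametrise to avoid those values, and invoke Mather's Lemma~\ref{lem4.3.1}. Two points, however, are handled in the paper that your write-up either skips or defers to the final sentence.

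First, Mather's Lemma as stated applies to an algebraic group acting on a finite-dimensional variety; applying it literally to the full group $\ir_V$ acting on germs is not legitimate. The paper deals with this from the outset by restricting to the finite-dimensional space $H_{\underline{w}}^d$ and to the subgroup $\ir_V^0=\ir_V\cap G_0$ of weight-preserving diffeomorphisms, so that the relevant tangent space is the finite-dimensional vector space $\jj_{F_t}(\Theta_V^0)\cap H_{\underline{w}}^d$. Your claim that ``the dimension of this tangent module is constant'' only makes sense after this reduction.

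Second, your assertion that $B(t)=(1-t)I+tA$ is invertible over $\OO_n$ on a Zariski-open set \emph{containing $0$ and $1$} is unjustified at $t=1$: there is no reason for $A$ (which merely expresses one generating family of $M$ in terms of another, possibly non-minimal, one) to be a unit matrix over $\OO_n$. What one actually knows at $t=1$ is the \emph{hypothesis} $\jj_g(\Theta_V^0)=M$, not the invertibility of $B(1)$. The paper circumvents this by working in degree $d$: one has $\jj_{F_t}(\Theta_V^0)\cap H_{\underline{w}}^d\subseteq \jj_f(\Theta_V^0)\cap H_{\underline{w}}^d$ for all $t$, with equality at $t=0$ and at $t=1$ by hypothesis; upper semicontinuity of the dimension then shows the equality locus is cofinite in $\C$ and contains $\{0,1\}$, so one can choose a smooth path $\alpha$ from $0$ to $1$ avoiding the bad values. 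This is precisely the ``graded comparison on the weight-$d$ component'' you allude to at the end, and it is not optional: it is what makes the argument close.
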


\begin{proof}

To prove this claim choose an appropriate submanifold of
$H_{\underline{w}}^d(n,1;\C)$ containing $f$ and $g$ and then apply
Mather's lemma to get the result.

Let $f,g\in H_{\underline{w}}^d(n,1;\C)$ such that
$\jj_f(\Theta_V^0)=\jj_g(\Theta_V^0)$. Set $f_t=(1-t)f+tg\in
H_{\underline{w}}^d(n,1;\C)$. Consider the $\ir_V$-equivalence
action on $H_{\underline{w}}^d(n,1;\C)$ under the group
$\ir_V^0=\ir_V\cap J_0$, we have
\begin{equation}\label{eq9.1}
T_{f_t}(\ir_V^0\cdot f_t)=\jj_{f_t}(\Theta_V^0)\cap
H_{\underline{w}}^d=\langle df_t(\xi_i):i=1,\ldots,p\rangle\cap
H_{\underline{w}}^d\subset T_{f_t}(J_0\cdot f_t)
\end{equation}
where $df_t(\xi_i)=\sum_{j=1}^n a_{ij}x^{\underline{P}}\frac{\partial f_t}{\partial
x_j}=\sum_{j=1}^n a_{ij}x^{\underline{P}}[(1-t)\frac{\partial f}{\partial
x_j}+t\frac{\partial g}{\partial
x_j}],\,\,<\underline{P},\underline{w}>=w_j$.\\
Note that we have the inclusion of finite dimensional $\C$-vector spaces
\begin{equation}\label{eq9.2}
T_{f_t}(\ir_V^0\cdot f_t)=\langle df_t(\xi_i)\rangle\cap
H_{\underline{w}}^d\subset\jj_f(\Theta_V^0)\cap H_{\underline{w}}^d
\end{equation}
with equality for $t=0$ and $t=1$. The spaces $\jj_{f_t}(\Theta_V^0)\cap H_{\underline{w}}^d$ and $\jj_f(\Theta_V^0)\cap H_{\underline{w}}^d$ are not trivial by Euler identity \ref{euler}.

Let's  show that we have equality for all $t\in[0,1]$ except finitely many values.\\
Take $\dim(\jj_{f_t}(\Theta_V^0)\cap H_{\underline{w}}^d)=\dim(\jj_f(\Theta_V^0)\cap H_{\underline{w}}^d)=s$ (say).
Let's fix $\{e_1,\ldots,e_s\}$ a basis of $\jj_f(\Theta_V^0)\cap H_{\underline{w}}^d$. Consider the $s$ polynomials corresponding to the generators of the space
\eqref{eq9.1}:
\[\alpha_i(t)=df_t(\xi_i)=\sum_{j=1}^n a_{ij}x^{\underline{P}}\frac{\partial f_t}{\partial
x_j}=\sum_{j=1}^n a_{ij}x^{\underline{P}}[(1-t)\frac{\partial f}{\partial
x_j}+t\frac{\partial g}{\partial
x_j}],\,\,<\underline{P},\underline{w}>=w_j\]
We can express each $\alpha_i(t)$, $i=1,\ldots,s$ in terms of
above mentioned fixed basis as
\begin{equation}\label{eq9.3}
\alpha_i(t)=\phi_{i1}(t)e_1+\ldots+\phi_{is}(t)e_s,\,\,\forall\,\,
i=1,\ldots,s
\end{equation}
where each $\phi_{ij}(t)$ is linear in $t$. Consider the matrix of
transformation corresponding to the eqs. \eqref{eq9.3}
\[
(\phi_{ij}(t))_{s\times s}= \left(
  \begin{array}{cccc}
    \phi_{11}(t) & \phi_{12}(t)& \ldots & \phi_{1s}(t) \\
    \vdots & \vdots & \ddots & \vdots \\
    \phi_{s1}(t) & \phi_{s2}(t) & \ldots & \phi_{ss}(t) \\
  \end{array}
\right)
\]
having rank at most $s$. Note that the equality $$\jj_{f_t}(\Theta_V^0)\cap H_{\underline{w}}^d=\jj_f(\Theta_V^0)\cap H_{\underline{w}}^d$$ holds for those values of $t$ in $\C$ for
which the rank of above matrix is precisely $s$. We have the $s\times
s$-matrix whose determinant is a polynomial of degree $s$ in $t$
and by the fundamental theorem of algebra it has at most $s$ roots
in $\C$ for which rank of the matrix of transformation will be less
than $s$. Therefore, the above-mentioned equality does not hold for
at most finitely many values, say $t_1,\ldots,t_q$ where
$1\leq q\leq s$.

It follows that the dimension of the space \eqref{eq9.1} is constant
for all $t\in \C$ except finitely many values $\{t_1,\ldots,t_q\}$.\\
For an arbitrary smooth path
\[\alpha:\C\longrightarrow\C\backslash \{t_1,\ldots,t_q\}\]
with $\alpha(0)=0$ and $\alpha(1)=1$, we have the connected smooth
submanifold
\[P=\{f_t=(1-\alpha(t))f(x)+\alpha(t)g(x):\,t\in\C\}\]
of $H_{\underline{w}}^d$. By the above, it follows $\dim
T_{f_t}(\ir_V^0\cdot f_t)$ is constant for $f_t\in P$.

Now, to apply Mather's lemma, we need to show that the tangent space
to the submanifold $P$ is contained in that to the orbit
$\ir_V^0\cdot f_t$ for any $f_t\in P$. One clearly has
\[T_{f_t}P=\{\dot{f_t}=-\dot{\alpha}(t)f(x)+\dot{\alpha}(t)g(x):\,\forall\,t\in\C\}\]
Therefore, by Euler formula \ref{euler}, we have\[T_{f_t}P\subset
T_{f_t}(\ir_V^0\cdot f_t)\] By Mather's lemma the submanifold $P$ is
contained in a single orbit. Hence the result.
\end{proof}

\begin{thm}\label{th9.2}
Let $f,g\in H_{\underline{w}}^d(n,1;\C)=H_{\underline{w}}^d$ be two
quasihomogeneous polynomials of degree $d$ and $\Phi\in H_{\underline{w}}^r(n,1;\C)$ be a quasihomogeneous polynomial of degree $r$ w.r.t. the same weights $\underline{w}=(w_1,\ldots,w_n)$. If $M_V(f)\simeq M_V(g)$
(isomorphism of graded $\C$-algebra) then $f\stackrel{\ir_V}{\sim}g$, where $\Phi^{-1}(0)=V$ is a hypersurface in $(\C^n,0)$.
\end{thm}

\begin{proof}

We show firstly that an isomorphism of graded $\C$-algebras
\[\varphi:(M_V(g))_l=(\frac{\C[x_1,\ldots,x_n]}{\jj_g(\Theta_V^0)})_l\stackrel{\simeq}{\longrightarrow}
(M_V(f))_l=(\frac{\C[x_1,\ldots,x_n]}{\jj_f(\Theta_V^0)})_l\] is induced by an isomorphism
$u:\C^n\longrightarrow\C^n$ such that $u^*(\jj_g(\Theta_V^0))=\jj_f(\Theta_V^0)$.

Consider the following commutative diagram.
$$\xymatrix{
0 \ar[d] & 0 \ar[d] \\
(\jj_g(\Theta_V^0))_{d+l}  \ar@{.>}[r]^{u^*} \ar[d]^i & (\jj_f(\Theta_V^0))_{d+l} \ar[d]^j \\
J_{\underline{w}}^{d+l}  \ar@{.>}[r]^{u^*} \ar[d]^p & J_{\underline{w}}^{d+l}
\ar[d]^q
\\
(M_V(g))_l \ar[r]^{\varphi}_{\simeq} \ar[d] & (M_V(f))_l \ar[d] \\
0 & 0 }
$$
Define the morphism $u^*:J_{\underline{w}}^{d+l}\rightarrow
J_{\underline{w}}^{d+l}$ by
\begin{equation}\label{eq9.4}
u^*(x_i)=L_i(x_1,\ldots,x_n)=\sum_{j=1}^n a_{ij}x_j^{\alpha_j}+\sum
a_{ik_1\ldots k_n}x_{k_1}^{\beta_1}\ldots
x_{k_n}^{\beta_n}\,;\,i=1,\ldots,n
\end{equation}
where
$k_m\in\{1,\ldots,n\}\,\&\,w_{k_1}\beta_1+\ldots+w_{k_n}\beta_n=deg_{\underline{w}}(x_i)=w_j\alpha_j$,
which is well defined by commutativity of diagram below.
$$\xymatrix{
x_i  \ar@{|.>}[r]^{u^*} \ar@{|->}[d]^p &  L_i\ar@{|->}[d]^q \\
\widehat{x_i} \ar@{|->}[r]^{\varphi}_{\simeq} & \widehat{L_i}}$$
Note that the isomorphism $\varphi$ is a degree preserving map and
is also given by the same morphism $u^*$. Therefore, $u^*$ is an
isomorphism.\\
Now, we show that $u^*(\jj_g(\Theta_V^0))=\jj_f(\Theta_V^0)$. For every $G\in(\jj_g(\Theta_V^0))_{d+l}$, we
have $u^*(G)\in(\jj_f(\Theta_V^0))_{d+l}$ by commutative diagram below.
$$\xymatrix{
G  \ar@{|->}[r]^{u^*} \ar@{|->}[d]^p &  F=u^*(G) \ar@{|->}[d]^q \\
\widehat{0} \ar@{|->}[r]^{\varphi} & \widehat{F}=\widehat{0}}
$$
It implies that $u^*((\jj_g(\Theta_V^0))_{d+l})\subset(\jj_f(\Theta_V^0))_{d+l}$. As $u^*$ is an
isomorphism, therefore it is invertible and by repeating the above
argument for its inverse, we have $u^*((\jj_g(\Theta_V^0))_{d+l})\supset(\jj_f(\Theta_V^0))_{d+l}$.
Therefore, $u^*((\jj_g(\Theta_V^0))_{d+l})=(\jj_f(\Theta_V^0))_{d+l}$. It follows that $u^*(\jj_g(\Theta_V^0))=\jj_f(\Theta_V^0)$.
Thus, $u^*$ is an isomorphism with $u^*(\jj_g(\Theta_V^0))=\jj_f(\Theta_V^0)$.

By eq. \eqref{eq9.4}, the map $u:\C^n\to\C^n$ can be defined by
$$u(z_1,\ldots,z_n)=(L_1(z_1,\ldots,z_n),\ldots,L_n(z_1,\ldots,z_n))$$
where $L_i(z_1,\ldots,z_n)=\sum_{j=1}^n a_{ij}x_j^{\alpha_j}+\sum
a_{ik_1\ldots k_n}x_{k_1}^{\beta_1}\ldots
x_{k_n}^{\beta_n}\,;\,i=1,\ldots,n, k_m\in\{1,\ldots,n\}\\\&\,w_{k_1}\beta_1+\ldots+w_{k_n}\beta_n=deg_{\underline{w}}(x_i)=w_j\alpha_j$. Note that $u$ is an isomorphism by Prop. 3.16 \cite{D1}, p.23.

In this way, we have shown that the isomorphism $\varphi$ is induced
by the isomorphism $u:\C^n\to\C^n$ such that
$u^*(\jj_g(\Theta_V^0))=\jj_f(\Theta_V^0)$.

Consider $u^*(\jj_g(\Theta_V^0))=<g_1\circ u,\ldots,g_n\circ u>=\jj_{g\circ u}(\Theta_V^0)$,
where $g_j$ are the generators of $\jj_g(\Theta_V^0)$. Therefore, $\jj_{g\circ
u}(\Theta_V^0)=\jj_f(\Theta_V^0)\Rightarrow g\circ u\stackrel{\ir_V}{\sim}f$, by Lemma
\ref{lem9.1}. Hence, by definition there exists an analytic isomorphism $h\in \ir_V$ such that $g\circ u=f\circ h$. Since $\ir_V$ is a group, therefore $h^{-1}\in \ir_V$. Taking $u=h^{-1}$ we have
$g\circ h\stackrel{\ir_V}{\sim}g$. Thus, $ f\stackrel{\ir_V}{\sim}g$.
\end{proof}

\begin{rmk}\label{rk9.3}
The converse implication, namely
$$f\stackrel{\ir_V}{\sim}g\Rightarrow M_V(f)\simeq M_V(g)$$
always holds(even for analytic germs $f,\,g$ defining IHS).
\end{rmk}

\begin{proof}
Let $f\stackrel{\ir_V}{\sim}g$. Then, by definition, there exists an
analytic isomorphism $h\in \ir_V$ such that $f\circ h=g$. It follows
that $\jj_{f\circ
h}(\Theta_V^0)=\jj_g(\Theta_V^0)\Rightarrow h^*(\jj_f(\Theta_V^0))=\jj_g(\Theta_V^0)$.\\
By Prop. 3.16 \cite{D1}, p.23 $h^*$ is also analytic isomorphism.\\
Thus, $M_V(f)\cong M_V(g)$ by the commutativity of the diagram below.
$$\xymatrix{
0 \ar[d] & 0 \ar[d] \\
\jj_g(\Theta_V^0)  \ar[r]^{h^*} \ar[d]^i & \jj_f(\Theta_V^0) \ar[d]^j \\
\C[x_1,\ldots,x_n]  \ar[r]^{h^*} \ar[d]^p & \C[x_1,\ldots,x_n]
\ar[d]^q
\\
M_V(g) \ar[r]^{\varphi} \ar[d] & M_V(f) \ar[d] \\
0 & 0 }
$$
\end{proof}

\begin{thm}\label{th9.4}
Let $f\in H_{\underline{w}}^d(n,1;\C)=H_{\underline{w}}^d$ be a
quasihomogeneous polynomials of degree $d$ and $\Phi\in H_{\underline{w}}^r(n,1;\C)$ be a quasihomogeneous polynomial of degree $r$ w.r.t. the same weights $\underline{w}=(w_1,\ldots,w_n)$. Let $g$ be an arbitrary analytic germ  such that $\jj_f(\Theta_V^0)\cong\jj_g(\Theta_V^0)$, where $\Phi^{-1}(0)=V$ is a hypersurface in $(\C^n,0)$. Then
$f\stackrel{\ir_V}{\sim}g$.
\end{thm}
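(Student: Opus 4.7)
The plan is to replicate the two-step strategy from the proof of Theorem \ref{th9.2}. First I would extract from the ideal isomorphism $\jj_f(\Theta_V^0)\cong\jj_g(\Theta_V^0)$ an analytic isomorphism $u:(\C^n,0)\to(\C^n,0)$ satisfying $u^{\ast}(\jj_f(\Theta_V^0))=\jj_g(\Theta_V^0)$, and then apply Mather's Lemma \ref{lem4.3.1} to the affine homotopy $h_t=(1-t)F+tg$ in $\OO_n$, where $F:=f\circ u$, to conclude $F\sim_{\ir_V}g$. Concatenating this with $F\sim_{\ir_V}f$ (obtained as in the closing lines of Theorem \ref{th9.2}) would then yield $f\sim_{\ir_V}g$.

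The first step runs through the same commutative diagram as in the proof of Theorem \ref{th9.2}, with the Milnor algebras replaced by the Jacobean ideals and with the lift $u^{\ast}$ defined on $d$-quasijets; Proposition 3.16 of \cite{D1} ensures that $u$ is an analytic isomorphism. Since $V=\Phi^{-1}(0)$ is quasihomogeneous of the same type $\underline{w}$, the Euler field $\xi_E=\sum_i w_i x_i\partial_i$ satisfies $\xi_E(\Phi)=r\Phi\in I(V)$ and therefore lies in $\Theta_V^0$; Euler's identity \eqref{euler} then gives $f\in\jj_f(\Theta_V^0)$, and pulling back yields $F\in u^{\ast}(\jj_f(\Theta_V^0))=\jj_g(\Theta_V^0)$.

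For the Mather step, note that for every $\xi\in\Theta_V^0$
\[dh_t(\xi)=(1-t)\,dF(\xi)+t\,dg(\xi)\in \jj_g(\Theta_V^0),\]
so $\jj_{h_t}(\Theta_V^0)\subseteq\jj_g(\Theta_V^0)$. Fixing a basis of $\jj_g(\Theta_V^0)$ at a suitable finite-dimensional truncation and expressing a generating set of $\jj_{h_t}(\Theta_V^0)$ in it produces a transition matrix with entries linear in $t$, whose determinant is a polynomial in $t$ with at most finitely many zeros; as at the end of the proof of Lemma \ref{lem9.1}, this guarantees $\jj_{h_t}(\Theta_V^0)=\jj_g(\Theta_V^0)$ for all $t$ outside a finite set $\{t_1,\ldots,t_q\}$. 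Choosing a smooth path $\alpha:[0,1]\to\C\setminus\{t_1,\ldots,t_q\}$ joining $0$ and $1$, the connected submanifold $P=\{h_{\alpha(t)}\}$ has constant orbit dimension along it, and $T_{h_{\alpha(t)}}P=\langle\dot\alpha(t)(g-F)\rangle$ sits inside $\jj_{h_t}(\Theta_V^0)=\jj_g(\Theta_V^0)$ as soon as one knows $g-F\in\jj_g(\Theta_V^0)$.

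The principal obstacle is thus to show $g\in\jj_g(\Theta_V^0)$, equivalently, by Saito's theorem, to show that the arbitrary analytic germ $g$ is itself equivalent to a quasihomogeneous germ. I would derive this from the ring automorphism $u^{\ast}$ produced in the first step: descending to the quotients, it gives a graded-algebra isomorphism $M_V(f)\cong M_V(g)$, and combining this with the quasihomogeneity of $f$ and Saito's Theorem \ref{ts2} forces $g$ into the quasihomogeneous equivalence class of $f$, whence $g\in\jj_g(\Theta_V^0)$ and the Mather argument closes. This is precisely the step where the quasihomogeneity of $f$ is genuinely essential; as the Gaffney--Hauser example of \cite{GH} makes clear, the conclusion of Theorem \ref{th9.4} would fail without such an input.
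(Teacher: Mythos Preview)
Your route differs from the paper's, and your closing step has a genuine gap.

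The paper's argument is shorter and does not re-run the Mather/determinant computation. Having extracted an analytic isomorphism $h\in\ir$ with $h^{*}(\jj_g(\Theta_V^0))=\jj_f(\Theta_V^0)$, the paper records $\jj_{g\circ h}(\Theta_V^0)=\jj_f(\Theta_V^0)$, asserts that $g\circ h$ is quasihomogeneous, and then invokes Lemma~\ref{lem9.1} as a black box to get $g\circ h\sim_{\ir_V}f$, from which $g\sim_{\ir_V}f$ is concluded in one line. Saito's theorem is never used, and no homotopy in the infinite-dimensional space $\OO_n$ is set up; everything is pushed back into the finite-dimensional quasihomogeneous setting of Lemma~\ref{lem9.1}.

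Your appeal to Theorem~\ref{ts2} does not deliver what you need. As stated in the paper, that result concerns the local algebras $Q_f=\OO_n/(f)$ and $Q_g=\OO_n/(g)$, not the relative Milnor algebras $M_V(\cdot)$, and it takes both $f\in\jj_f$ and $g\in\jj_g$ as \emph{hypotheses}; it therefore cannot be used to conclude that the arbitrary germ $g$ is equivalent to a quasihomogeneous one. Even if you could establish such an $\ir$-equivalence, it would yield only $g\in\jj_g$, whereas the tangent-space inclusion in Mather's lemma requires the stronger membership $g\in\jj_g(\Theta_V^0)$: the Euler identity places a quasihomogeneous germ in its \emph{relative} Jacobean ideal only because the Euler field lies in $\Theta_V^0$, and an $\ir$-equivalence that does not preserve $V$ will not carry this over. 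The same objection undermines your concatenation ``$F\sim_{\ir_V}f$'': the isomorphism $u$ produced in step one lies only in $\ir$, so $f\circ u\sim_{\ir_V}f$ is not automatic, and the closing lines of Theorem~\ref{th9.2} do not supply it for a general $u\in\ir$.
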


\begin{proof}
Let $\jj_f(\Theta_V^0)\cong\jj_g(\Theta_V^0)$. Then there exists an analytic isomorphism $h\in \ir$ such that $h^*(\jj_g(\Theta_V^0))=\jj_f(\Theta_V^0)$ by Prop. 3.16 \cite{D1}, p.23.\\
It follows that $\jj_{g\circ
h}(\Theta_V^0)=\jj_f(\Theta_V^0)$, where $g\circ h$ is quasihomogeneous polynomial. It implies that $g\circ h\stackrel{\ir_V}{\sim}f$ by Lemma \ref{lem9.1}. Hence, by definition there exists an analytic isomorphism $u\in \ir_V$ such that $g\circ h=f\circ u$. Since $\ir_V$ is a group, therefore $u^{-1}\in \ir_V$. Taking $h=u^{-1}$ we have
$g\circ h\stackrel{\ir_V}{\sim}g$. Thus, $ f\stackrel{\ir_V}{\sim}g$.
\end{proof}

The following Example of Gaffney and Hauser, in \cite{GH}, suggests
us that we can not extend the Lemma \ref{lem9.1} and Theorem \ref{th9.4} for arbitrary
analytic germs.

\begin{ex}
Let $h:(\C^n,0)\to (\C,0)$ be any function satisfying $h\notin
\jj_h\subseteq\OO_n$ i.e. $h\notin H_{\underline{w}}^d(n,1;\C)$.
Define a family $f_t:(\C^n\times\C^n\times\C,0)\to(\C,0)$ by
$f_t(x,y,z)=h(x)+(1+z+t)h(y)$, and let
$(X_t,0)\subseteq(\C^{2n+1},0)$ be the hypersurface defined by
$f_t$. Note that
$$\jj_{f_t}=\langle\frac{\partial h}{\partial x_i}(x),\frac{\partial
h}{\partial y_j}(y),h(y)\rangle,\,\,t\in\C.$$ On the other hand, the
family $\{(X_t,0)\}_{t\in\C}$ is not trivial i.e.
$(X_t,0)\ncong(X_0,0)$: For, if $\{f_t\}_{t\in\C}$ were trivial, we
would have by Prop. 2, $\S 1$, \cite{GH}$$\frac{\partial
f_t}{\partial t}=h(y)\in
(f_t)+m_{2n+1}\jj_{f_t}=(f_t)+m_{2n+1}\jj_{h(x)}+m_{2n+1}\jj_{h(y)}+m_{2n+1}(h(y))$$
Solving for $h(y)$ implies either $h(y)\in \jj_{h(y)}$ or $h(x)\in
\jj_{h(x)}$ contradicting the assumption on $h$.\\
It follows that $f_t$ is not $\ir$-equivalent to $f_0$.
\end{ex}

Before proceeding further, we state the lifting lemma.
\begin{lem}\label{lem3} Let $\varphi$ be a morphism of analytic $K$-algebras
$$\varphi:A=K\langle x_1,\ldots,x_n\rangle/I\to B=K\langle y_1,\ldots,y_m\rangle/J$$
Then $\varphi$ has a lifting $\widetilde{\varphi}:K\langle {\bf x}\rangle\to K\langle {\bf y}\rangle$ which can be chosen as an isomorphism in the case that $\varphi$ is an isomorphism and $n=m$.
\end{lem}

For arbitrary hypersurface singularities (i.e. not necessary with isolated singularities), the following result has been obtained by  Greuel, Lossen and Shustin \cite{GLS}, 2007.

\begin{thm}(Mather-Yau Theorem) Let $f,g\in\C\{x_1,\ldots,x_n\}$ be two arbitrary hypersurface
singularities having isomorphic Tjurina algebras $T(f) \simeq T(g)$.
Then $f\stackrel{\ik}{\sim}g$, where $\stackrel{\ik}{\sim}$ denotes
the contact equivalence.
\end{thm}

For arbitrary hypersurface singularities (i.e. not necessary with
isolated singularities), we establish now the relative version of
Mather-Yau Theorem under the hypothesis that $V=\Phi^{-1}(0)$ be a
variety in $(\C^n,0)$ such that $\Phi\in\C\{x_1,\ldots,x_n\}$ and
$\Theta_V$ is finitely generated. We remark that if
$\Phi\in\C[[x_1,\ldots,x_n]]$, the ring of formal power series, it
is known that $\Theta_V$ is always finitely generated \cite{GS}, Th.
5.4, p.771. However, we do not know whether the result holds for all
$\Phi\in\C\{x_1,\ldots,x_n\}$.

\begin{thm}\label{th9.5}
Let $f,g\in\C\{x_1,\ldots,x_n\}$ be two arbitrary hypersurface
singularities having isomorphic relative Tjurina algebras
$T_V(f)\simeq T_V(g)$, where $V=\Phi^{-1}(0)$ be a variety in
$(\C^n,0)$ such that $\Phi\in\C\{x_1,\ldots,x_n\}$ and $\Theta_V$ is
finitely generated. Then $f\stackrel{\ik_V}{\sim}g$, where
$\stackrel{\ik_V}{\sim}$ denotes the relative contact equivalence.
\end{thm}

\begin{proof}
Consider the isomorphism of graded $\C$-algebras
\[\varphi:T_V(f)=\frac{\C\{ x_1,\ldots,x_n\}}{\langle f,\jj_f(\Theta_V^0)\rangle}\stackrel{\simeq}{\longrightarrow}
T_V(g)=\frac{\C\{ x_1,\ldots,x_n\}}{\langle g,\jj_g(\Theta_V^0)\rangle}.\]
Note that $\varphi$ lifts to an isomorphism
$\widetilde{\varphi}:\C\{ x_1,\ldots,x_n\}\to \C\{ x_1,\ldots,x_n\}$ with $\widetilde{\varphi}(\langle f,\jj_f(\Theta_V^0)\rangle)=\langle g,\jj_g(\Theta_V^0)\rangle$
by Lifting Lemma \ref{lem3}. Since $\widetilde{\varphi}(\langle f,\jj_f(\Theta_V^0)\rangle)=\langle \widetilde{\varphi}(f),\jj_{\widetilde{\varphi}(f)}(\Theta_V^0)\rangle$, we may assume that
\begin{equation}\label{eq1}
\langle f,\jj_f(\Theta_V^0)\rangle=\langle g,\jj_g(\Theta_V^0)\rangle.
\end{equation}
Let $f,g\in\C\{x_1,\ldots,x_n\}$. Set $F(x,t)=f_t(x)=(1-t)f(x)+tg(x)$ for $t\in\C$. Thus $(f_t)$ is a 1-parameter family of germs with $f_0=f$, $f_1=g$. We intend to show that any two germs in this family satisfying Equation \ref{eq1} are $\ik_V$ equivalent.\\
Consider the family of ideals
$$I_{f_t}=\langle f_t,\jj_{f_t}(\Theta_V^0)\rangle\subset\C\{x_1,\ldots,x_n,t\},\,t\in\C.$$
By Eq. \ref{eq1}, $I_{f_t}\subset I_f=\langle f,\jj_f(\Theta_V^0)\rangle$ and $I_f=I_g$.\\
Now, represent $f,g$ in a neighbourhood $W=W({\bf 0})\subset\C^n$ by holomorphic functions and consider the coherent $\OO_{W\times\C}$-module
$$\mathfrak{F}=\langle f,\jj_f(\Theta_V^0)\rangle/\langle f_t,\jj_{f_t}(\Theta_V^0)\rangle,$$
whose support is a closed analytic set in $W\times\C$, see A.7 \cite{GLS}. Moreover, note that
$$supp(\mathfrak{F})\cap(\{0\}\times\C)=\{t\in\C|\mathfrak{F}_{(0,t)}\neq0\}=\{t\in\C|I_f\neq I_{f_t}\},$$
which is a closed analytic, hence a discrete, set of points in $\C=\{{\bf 0}\}\times\C$. It follows that the set $U=\{t\in\C|I_{f_t}=I_f\}$ is open and connected and contains 0 and 1. Note that
$$\frac{\partial f_t}{\partial t}=g-f\in I_f=I_{f_t}=\langle f_t,\jj_{f_t}(\Theta_V^0)\rangle$$
for all $t\in U$. By Theorem 2.22 \cite{GLS}, p.126, we get that $f_t\stackrel{\ik_V}{\sim}f_{\acute{t}}$ for $t,\acute{t}\in U$ such that $|t-\acute{t}|$ is sufficiently small. Therefore, $f_t\stackrel{\ik_V}{\sim}f$ for all $t\in U$, in particular, $f\stackrel{\ik_V}{\sim}g$.
\end{proof}

Note that the converse implication is just an application of the chain rule, as performed in the proof of Lemma 2.10 \cite{GLS}, p.119.

The Theorems \ref{th9.2} and \ref{th9.4} are particular cases of Theorem \ref{th9.5}. We dealt with the quasihomogeneous part first to explore the ideas deeply.

\end{document}